\tikzset{>=stealth',
  head/.style = {fill = white, text=black},
  plaque/.style = {draw, rectangle, minimum size = 10mm, fill=white}, 
     pil/.style={->,thick},
  junct/.style = {draw,circle,inner sep=0.5pt,outer sep=0pt, fill=black}
  }
\theoremstyle{plain}
\newtheorem{theorem}{Theorem}[section]
\newtheorem{lemma}[theorem]{Lemma}
\newenvironment{example}
  {\pushQED{\qed}\examplex}
  {\popQED\endexamplex}
\theoremstyle{definition}
\newtheorem{remark}[theorem]{Remark}
\numberwithin{equation}{section}
\definecolor{darkblue}{rgb}{0.0,0,0.7}
\newcommand{\newword}[1]{\textcolor{darkblue}{\textbf{\emph{#1}}}}
\newcommand{\bfa}{\mathbf{a}}
\newcommand{\bfb}{\mathbf{b}}
\newcommand{\jdt}{\ensuremath{\mathrm{jdt}}}
\newcommand{\fsl}{\ensuremath{\mathfrak{sl}}}
\newcommand{\sfb}{\ensuremath{\mathsf{b}}}
\newcommand{\sfc}{\ensuremath{\mathsf{c}}}
\newcommand{\evac}{\epsilon}
\begin{document}

\title{Tableau evacuation and webs}

\author{Rebecca Patrias}
\address[RP]{Department of Mathematics, University of St.\ Thomas, St.\ Paul, MN 55105, USA}
\email{rebecca.patrias@stthomas.edu}

\author{Oliver Pechenik}
\address[OP]{Department of Combinatorics \& Optimization, University of Waterloo, Waterloo, ON N2L 3G1, Canada}
\email{oliver.pechenik@uwaterloo.ca}

\date{\today}

\begin{abstract}
Webs are certain planar diagrams embedded in disks. They index and describe bases of tensor products of representations of $\fsl_2$ and $\fsl_3$. There are explicit bijections between webs and certain rectangular tableaux. Work of Petersen--Pylyavskyy--Rhoades (2009) and Russell (2013) shows that these bijections relate web rotation to tableau promotion. We describe the analogous relation between web reflection and tableau evacuation.
 \end{abstract}

\maketitle

\section{Introduction}\label{sec:intro}

G.~Kuperberg \cite{Kuperberg} introduced webs to index and describe bases of tensor products of irreducible representations of low-rank Lie algebras. In this paper, we consider webs for the Lie algebras $\fsl_2$ and $\fsl_3$. Combinatorially, webs are certain planar diagrams embedded in disks. There are explicit bijections between sets of webs and sets of (standard or row-strict) rectangular Young tableaux \cite{Khovanov.Kuperberg,Petersen.Pylyavskyy.Rhoades,Tymoczko,Russell}. It was shown in \cite{Petersen.Pylyavskyy.Rhoades} and \cite{Russell} that these bijections intertwine rotation of web diagrams with the famous but more complicated action called \emph{tableau promotion} (introduced by M.-P.~Sch\"utzenberger \cite{Schutzenberger}). This connection has been useful for establishing combinatorial aspects of promotion through considering the simpler action of web rotation. 

Closely related to promotion is the involution of \emph{tableau evacuation} \cite{Schutzenberger}, which perhaps has been even better studied (see, e.g., \cite{Edelman.Greene,Haiman,Stembridge:evacuation,Stanley:promotion}). It is natural to wonder whether the above bijections carry evacuation to a simple action on webs. In this paper, we show that they do; precisely, evacuation corresponds to reflection of the web diagram. We believe that this fact has been known to experts for some time as folklore, at least in special cases; however, to our knowledge, it has never before been stated in print. The correct general statement of this result is somewhat subtle with respect to exactly how to position the planar diagram and the reflecting line.

Section~\ref{sec:background} reviews necessary background on webs, tableaux, and the bijections in question between them. In Section~\ref{sec:main}, we prove our main result Theorem~\ref{thm:main}. Our proof relies on Lemma~\ref{lem:rotation}, characterizing evacuations of tableaux of all rectangular shapes. This easy lemma is certainly known to experts. However, we are unaware of an explicit proof in the literature, so we include a complete proof of this useful characterization, even though we only need some special cases of it for Theorem~\ref{thm:main}.

\section{Background}\label{sec:background}

\subsection{Webs}
Consider a closed disk in the plane with $2n$ marked points on the boundary. An \newword{$\fsl_2$ web} is a collection of $n$ nonintersecting curves inside the disk, each joining a pair of marked points (considered up to isotopy). In the combinatorics literature, these diagrams also often appear under the name \emph{noncrossing matchings}. Algebraically, the boundary vertices here represent $2$-vectors and each curve corresponds to the determinant of the $2 \times 2$-matrix obtained by concatenating the vectors at its endpoints.

An \newword{$\fsl_3$ web} is similarly a sort of planar bipartite simple graph embedded in a disk (again up to isotopy). These graphs satisfy the following conditions:
\begin{itemize}
\item There are $k$ boundary vertices that are independently colored either black or white.
\item 	The graph is planar and vertices have a fixed bipartite coloring. 
\item Each boundary vertex has degree 1.
\item Each internal vertex has degree 3.
\item Each internal face has at least 6 sides.
\end{itemize}
(Technically, we are only defining ``non-elliptic'' webs here.) Algebraically, here the black vertices represent $3$-vectors, the white vertices represent $3$-covectors, and the edges represent, for example, that a white vertex adjacent to three black vertices $v_1, v_2, v_3$ corresponds to the covector that is the linear operator sending any $3$-vector $v$ to the determinant of the $3 \times 3$ matrix obtained by concatenating $v$ with any two of $v_1, v_2, v_3$. Examples of both sorts of web are given in Figure~\ref{fig:webexamples}.

\begin{figure}
    \centering
    \includegraphics[width=4in]{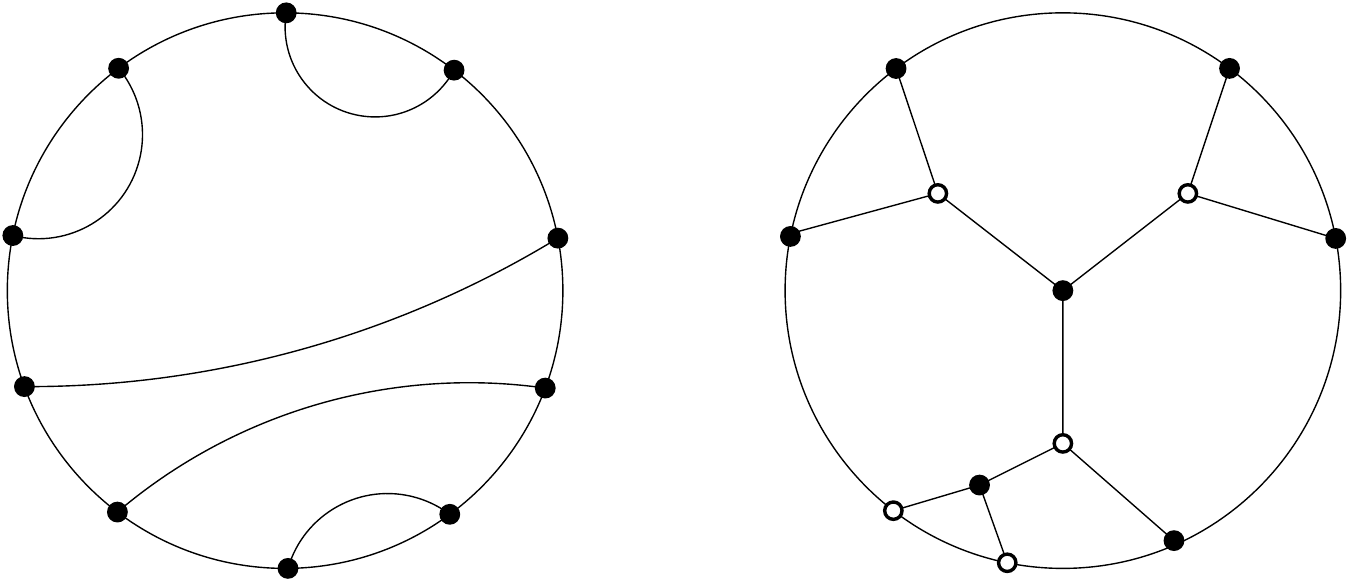}
    \caption{An $\fsl_2$ web (left) and an $\fsl_3$ web (right).}
    \label{fig:webexamples}
\end{figure}

 \subsection{Tableaux and evacuation}

Given an integer partition $\lambda = (\lambda_1 \geq \lambda_2 \geq \dots \geq \lambda_k > 0)$,  the corresponding \newword{Young diagram} is an array of left-justified rows of boxes, with $\lambda_i$ boxes in the $i$th row from the top. We conflate the integer partition $\lambda$ with its Young diagram. We write $\lambda \subseteq \mu$ if the Young diagram of $\lambda$ is a subset of the Young diagram of $\mu$, and we write $\mu / \lambda$ for the \newword{skew Young diagram} that is their set-theoretic difference $\mu \setminus \lambda$. The Young diagram $\lambda$ may be identified with the skew Young diagram $\lambda / \emptyset$, where $\emptyset$ denotes the empty Young diagram. A diagram $\lambda = \lambda/\emptyset$ is sometimes called a \newword{straight shape} (to distinguish it from genuinely skew shapes); a skew Young diagram is called an \newword{anti-straight shape} if it is the $180^\circ$ rotation of a straight shape. In all cases, we write $|\mu / \lambda|$ for the number of boxes of $\mu / \lambda$. 

A \newword{standard Young tableau} of shape $\mu / \lambda$ is a bijective filling of the boxes of that skew Young diagram with the integers $1, \dots, |\mu / \lambda|$ such that entries increase left-to-right across rows and top-to-bottom down columns. In this paper, we will mostly care about the case $\lambda=\emptyset$ and either $\mu=(n,n)$ or $\mu = (n,n,n)$, although we have some need of the general definitions.
 
 A \newword{row-strict tableau} of shape $\mu / \lambda$ is a filling of the boxes of that skew Young diagram with positive integers such that entries strictly increase left-to-right across rows and weakly increase top-to-bottom down columns. If $T$ is a row-strict tableau of shape $\mu = (k,k,k)$ with largest entry $M$ in which each positive integer $1, \dots, M$ appears in $T$ exactly once or twice, we call $T$ a \newword{Russell tableau}. The \newword{repetition} of a Russell tableau $T$ is the number $h$ of values appearing twice. Note that standard Young tableaux of shape $(k,k,k)$ are exactly the Russell tableaux of repetition $0$.
 
 \begin{remark}\label{remark:transpose}
 Instead of row-strict tableaux, it is more common to work with \emph{semistandard tableaux}, i.e., tableaux whose transposes are row-strict. Indeed, H.~Russell \cite{Russell} presents her results using the transposes of what we here call ``Russell tableaux''. However, we prefer to use row-strict tableaux so that the shapes of our tableaux match standard conventions in the web and invariant theory literatures.
 \end{remark}

The \newword{standardization} of a Russell tableau $T$ is defined as follows. Suppose that the value $i$ appears twice in $T$. Replace each entry $j>i$ with $j+1$. One instance of $i$ appears in a strictly lower row than the other. Replace this lower instance with $i+1$. Repeat this process until no value appears more than once in the tableau. The resulting tableau $U$ is a standard Young tableau and is the standardization of $T$. 

\begin{example}
The standardization of the Russell tableau
$\; \ytableaushort{12,13,34} \;$ of repetition $2$ is $\; \ytableaushort{13,24,56}$.	
\end{example}

We will need the following notions of jeu de taquin and rectification for row-strict tableaux. Further details are available in the standard textbooks \cite{Fulton:YoungTableaux, Stanley:EC2, Manivel}, provided the reader is willing to transpose all the conventions (see Remark~\ref{remark:transpose}).
Begin with a skew row-strict tableau $T$ of shape $\lambda/\mu$. Next, choose an empty cell $\sfc$ such that $\sfc$ shares its bottom and/or right edge with $\lambda/\mu$ and $\lambda/\mu\cup \{\sfc\}$ is a skew partition shape. For each such box $\sfc$, we have a tableau $\jdt_\sfc(T)$ that is obtained from the tableau $T$ and is called the \newword{jeu de taquin slide} of $T$ into $\sfc$, which we now describe.

Mark the cell $\sfc$ with $\bullet$. There is at least one box $\sfb_1$ of $\lambda/\mu$ adjacent to $\sfc$. If there are two such boxes and their entries in $T$ are not equal, let $\sfb_1$ denote the box with the smaller entry. If there are two such boxes and their entries in $T$ are equal, let $\sfb_1$ be the box sharing its left edge with $\sfc$. Move the entry in box $\sfb_1$ to cell $\sfc$ and move the $\bullet$ to box $\sfb_1$. Next, consider the cells to the right of and below $\sfb_1$ and repeat this procedure. Continue until the box containing $\bullet$ does not share its right or bottom edge with any box of $\lambda/\mu$, then delete the $\bullet$. The resulting row-strict tableau is $\jdt_\sfc(T)$. 
\begin{example}\label{ex:jdt}
Consider the row-strict tableau $T$ below, where box $\sfc$ is marked with $\bullet$. We compute $\jdt_\sfc(T)$. 
\[T=\begin{ytableau}
\bullet & 1 & 3 \\
1 & 2 & 3\\
3
\end{ytableau}\longrightarrow
\begin{ytableau}
1 & \bullet & 3 \\
1 & 2 & 3\\
3
\end{ytableau}\longrightarrow
\begin{ytableau}
1 & 2 & 3 \\
1 & \bullet & 3 \\
3
\end{ytableau}\longrightarrow
\begin{ytableau}
1 & 2 & 3 \\
1 & 3 & \bullet \\
3
\end{ytableau}\longrightarrow
\begin{ytableau}
1 & 2 & 3 \\
1 & 3  \\
3
\end{ytableau}=\jdt_c(T)\]
\end{example}

If $T$ is a row-strict tableau of skew shape $\lambda / \mu$ with $\mu \neq \emptyset$, we may successively perform jeu de taquin slides $|\mu|$ times until we obtain a row-strict tableau of some partition shape $\xi \subset \lambda$. While this resulting tableau appears to depend on the order in which cells are chosen for jeu de taquin slides, it is in fact uniquely determined by $T$ and is called the \newword{rectification} of $T$.

Using these notions of jeu de taquin, we are now ready to define the main operation considered in this note.
Let $T$ be a row-strict tableau of shape $\lambda$ and largest entry $n$. Suppose $T$ contains $z$ boxes with entry $1$. These boxes necessarily appear as the top $z$ boxes in the leftmost column of $\lambda$. 
Index these boxes as $\sfb_1, \sfb_2, \dots, \sfb_z$ from top to bottom. Let $\tilde T$ be the skew row-strict tableau obtained from $T$ by deleting each box $\sfb_i$ (and its entry) and subtracting $1$ from the entry in each of the other boxes of $T$. Then define $\Delta(T)=\jdt_{\sfb_1}\left( \jdt_{\sfb_2}(\cdots \jdt_{\sfb_z}(\tilde T) \dots )\right)$.

Let $\lambda_i$ denote the shape of the $i$th iterate $\Delta^i(T)$ and note that $\lambda_{i+1} \subset \lambda_i$. The \newword{evacuation} of $T$, denoted $\epsilon(T)$, is the row-strict tableau of shape $\lambda$ with entry $n+1-i$ in boxes $\lambda^i/\lambda^{i-1}$.  

\begin{example}\label{ex:evacuation}
Consider tableau $T$ below with largest entry 5. We compute $\Delta^i(T)$ for $i=1,2,3,4$ below and note that $\Delta^5(T)=\emptyset$. 
\[
T= \begin{ytableau}
1 & 3 & 4\\
2 & 3  \\
4 & 5
\end{ytableau}\hspace{.15in}
\Delta(T)= \begin{ytableau}
1 & 2 & 3\\
2 & 4  \\
3 
\end{ytableau}\hspace{.15in}
\Delta^2(T)= \begin{ytableau}
1 & 2\\
1 & 3  \\
2 
\end{ytableau}\hspace{.15in}
\Delta^3(T)= \begin{ytableau}
1 & 2\\
1   
\end{ytableau}\hspace{.15in}
\Delta^4(T)= \begin{ytableau}
1 
\end{ytableau}\]
Using these computations, we see that 
\[\epsilon(T)=
\begin{ytableau}
1 & 2 & 4\\
2 & 3 \\
3 & 5
\end{ytableau}\ .\]
\end{example}

Additional background on the evacuation operator and its properties may be found, for example, in \cite{Haiman,Stanley:EC2,Stanley:promotion,Bloom.Pechenik.Saracino}.

\subsection{Bijections}
We now describe explicit bijections between various sets of tableaux and corresponding sets of webs with cyclically numbered boundary vertices. At the end of this subsection, we provide Example~\ref{ex:bijections} to clarify all these bijections.

First, we give a bijection between $2$-row standard Young tableaux and $\fsl_2$ webs with cyclically numbered boundary vertices. Let $T$ be a $2\times n$ rectangular standard Young tableau. 
Find some entry $t_1$ in the top row such that $b_1=t_1+1$ is in the bottom row, and match them. Then find some $t_2 <b_2$ such that $t_2$ and $b_2$ are consecutive among unpaired values, $t_2$ appears in the top row, and $b_2$ appears in the bottom row; pair them. Repeat this process, pairing some $t_3 < b_3$ such that they are consecutive among unpaired values and have $t_3$ in the top row but $b_3$ in the bottom row, etc. After $n$ iterations, all values of $T$ are paired.

\begin{remark}\label{rem:pairing_order}
The pairing produced above is independent of the order in which we pair. Traditionally, this pairing is described so that $b_1, b_2, \dots, b_n$ are the labels of the bottom row of $T$ from left to right. Another natural choice would be so that $t_1, t_2, \dots, t_n$ are the labels of the top row of $T$ from \emph{right to left}. We will use sometimes the traditional order and sometimes this other natural one, as convenient.
\end{remark}

 Now, cyclically label $2n$ marked boundary points on a disk, and draw an arc connecting point $i$ and point $j$ whenever $i$ and $j$ are paired in the tableau $T$ as above. The image of this map is all $\fsl_2$ webs with cyclically numbered boundary vertices. We will refer to this bijection as the \newword{Catalan bijection} since both sets are enumerated by \emph{Catalan numbers} \cite{Stanley:Catalan}. 
 
We next discuss an explicit bijection between 3-row standard Young tableaux and $\fsl_3$ webs with all black boundary vertices and with a cyclical numbering on boundary vertices. This bijection was first introduced by M.~Khovanov--G.~Kuperberg \cite{Khovanov.Kuperberg} as a series of recursive growth rules and later simplified by K.~Petersen--P.~Pylyavskyy--B.~Rhoades \cite{Petersen.Pylyavskyy.Rhoades}. We follow a more explicit description of the bijection due to J.~Tymoczko \cite{Tymoczko}.

Let $T$ be a $3 \times k$ rectangular standard Young tableau. Pair the entries of the top two rows of $T$ as in the Catalan bijection described above. Similarly, pair the entries of the bottom two rows of $T$ as in the Catalan bijection. Observe that each entry of the middle row is thereby paired with one entry of each other row, so the entries of $T$ are partitioned into $k$ blocks of size $3$. Cyclically label $3k$ black boundary vertices on a disk, and draw arcs connecting point $i$ to points $j_1, j_2$ whenever $i$ is a label of the middle row of $T$ that shares its block with $j_1$ and $j_2$. We call this diagram an \newword{$m$-diagram}. Replace each degree $2$ boundary vertex by a $Y$-shaped subdiagram (introducing a new white vertex):
\begin{equation}\label{eq:tripods}
\includegraphics[width=4in]{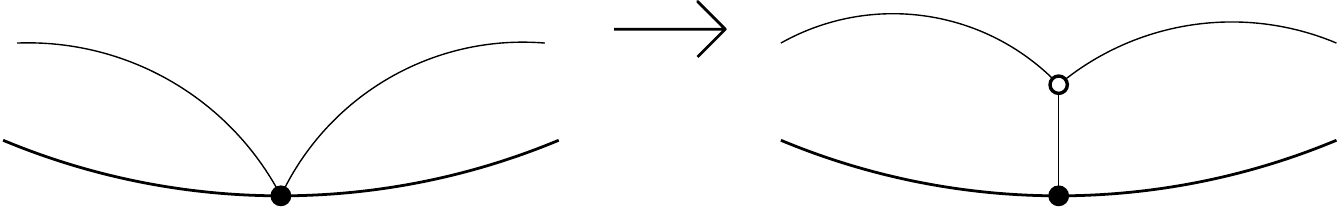}
\end{equation}
The diagram obtained may not be planar; draw it so as to minimize crossings. Now, resolve remaining crossings according to the following local rule:

\begin{equation}\label{eq:resolve_crossing}
    \includegraphics[width=3in]{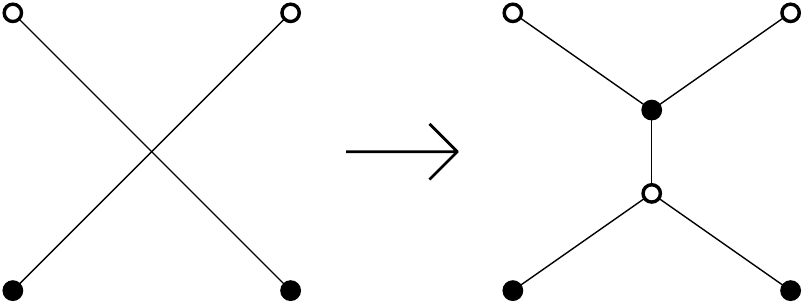}
\end{equation}
The result is an $\fsl_3$ web with all boundary vertices black and cyclically numbered. We call this map the \newword{Tymoczko bijection}.

Lastly, we describe an explicit bijection \cite{Russell} between $3\times k$ Russell tableaux with repetition $h$ and $\fsl_3$ webs with $h$ white boundary vertices and $3k-2h$ black boundary vertices. Let $T$ be a Russell tableau, and let $U$ be its standardization. Use the Tymoczko bijection to build an $\fsl_3$ web from $U$; call it $W_U$. Suppose value $i$ appears twice in $T$, and the corresponding cells of $U$ contain values $j$ and $j+1$. Then perform the following local operation on $W_U$ at boundary vertices $j$ and $j+1$, deleting both boundary vertices and moving their neighboring white vertex to the boundary:
\begin{equation}\label{eq:Russell_reduction}
	    \includegraphics[width=4in]{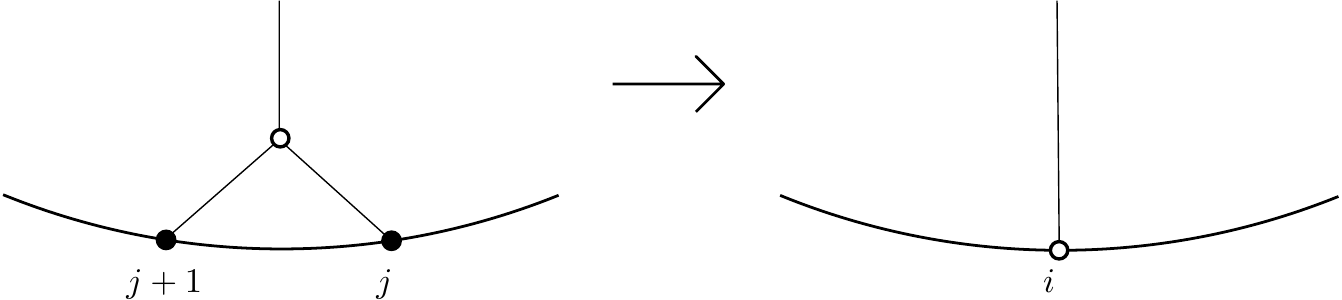}
\end{equation}
The resulting diagram is the $\fsl_3$ web for $T$. Note that this \newword{Russell bijection} restricts to the Tymoczko bijection for standard Young tableaux (i.e., Russell tableaux of repetition $0$).

\begin{example}\label{ex:bijections}
Let 	
$T= \begin{ytableau}
1 & 2 & 3 \\
1 & 4 & 5 \\
3 & 6 & 7
\end{ytableau}$
be a Russell tableau of shape $(3,3,3)$ and repetition $2$. We show how to produce a corresponding $\fsl_3$ web with $2$ white boundary vertices and $3 \cdot 3 - 2 \cdot 2 = 5$ black boundary vertices. This example is intended to also clarify the Catalan and Tymoczko bijections.

Observe that the standardization of $T$ is
$U= \begin{ytableau}
1 & 3 & 4 \\
2 & 6 & 7 \\
5 & 8 & 9 
\end{ytableau}$. In the top two rows of $U$, we pair $2$--$1$, $6$--$4$, and $7$--$3$ as in the Catalan bijection. In the bottom two rows of $U$, we pair $5$--$2$, $8$--$7$, and $9$--$6$. These pairings yield the $m$-diagram shown at top left below. Next, we replace degree $2$ boundary vertices with tripods to obtain the diagram in the middle of the top row below. This diagram is nonplanar, so we resolve the three crossings, obtaining the diagram on the right, which is the web corresponding to $U$ under the Tymoczko bijection.

Finally, we recall that the repeated values of $T$ are $1$ and $3$, corresponding to the values $1, 2$ and $4,5$ in $U$. Hence, we contract the web at these vertices to obtain the diagram shown in the second row below, the web corresponding to $T$.
\begin{center}
    \includegraphics[width=5in]{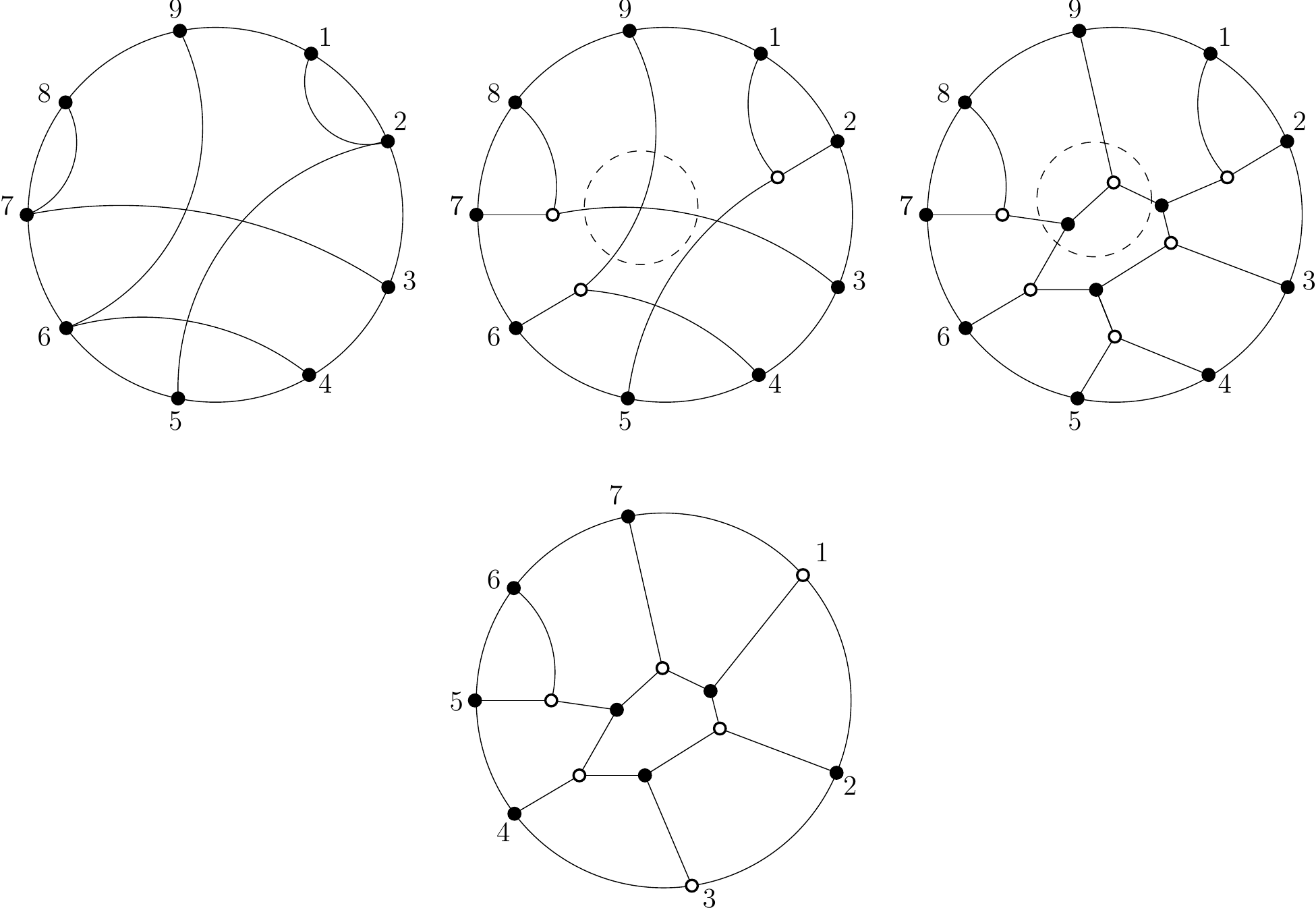}
\end{center}
\end{example}

Given an $\fsl_2$ web $W$ with cyclically numbered boundary vertices, we write $\mathbb{T}(W)$ for the corresponding tableau under the Catalan bijection. Similarly, if $W$ is an $\fsl_3$ web with cyclically numbered boundary vertices, we write $\mathbb{T}(W)$ for the corresponding tableau under the Russell bijection. Conversely, if $T$ is a tableau of appropriate type, we write $\mathbb{W}(T)$ for the corresponding web. Note that $\mathbb{T}$ and $\mathbb{W}$ are inverses.

A different bijection for Russell tableaux was given in \cite{Benkart.Cho.Dongho}. A related bijection between webs and \emph{oscillating tableaux} was described in \cite{Patrias:oscillating}.
For some other discussion of relations between tableau bases and web bases (mostly in the $\fsl_2$ case), see \cite{Russell.Tymoczko,Rhoades:webs}. In \cite{Hopkins.Rubey}, a relation was described between certain $\fsl_3$ webs and promotion and evacuation of linear extensions of a ``Kreweras poset'' $V(n)$; we do not understand how to connect this relation to the observations of the current paper, although they are clearly of a similar flavor.

\section{Main result}\label{sec:main}

For general shapes, evacuation appears complicated, and yet it is non-obviously an involution. However, for rectangular shapes, evacuation has the following much simpler characterization.
We believe the following easy but useful lemma is well-known. However, we are unaware of a proof appearing in the literature so we include a complete one below, although we will only need the result in the cases that $T$ is a $2$-row standard Young tableau or a $3$-row Russell tableau.

\begin{lemma}\label{lem:rotation}
	Let $T$ be a row-strict tableau of rectangular shape $\lambda = (k,k, \dots, k)$ with maximum label $n$. Then $\evac(T)$ is given by rotating $T$ by $180^\circ$ and reversing the alphabet so that $x \mapsto n+1-x$.
\end{lemma}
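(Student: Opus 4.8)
The plan is to show $\evac(T)=T^\ast$, where $T^\ast$ denotes the rotate-and-reverse tableau, by unwinding the recursive definition of $\evac$ and matching it step by step against the structure of $T^\ast$. First I would record the easy preliminary that $T^\ast$ is a genuine row-strict tableau of shape $\lambda$: writing $\rho$ for the $180^\circ$ rotation of the cells of the rectangle $\lambda$ (an involution on cells, since a rectangle is centrally symmetric) and setting $T^\ast(c)=n+1-T(\rho(c))$, the strict increase along rows and weak increase down columns of $T$ are each carried to the corresponding conditions for $T^\ast$, because $x\mapsto n+1-x$ reverses the order while $\rho$ reverses both the left-to-right and top-to-bottom directions. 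With this in hand, since $\evac(T)$ places the label $n+1-i$ on exactly the cells $\lambda^{i-1}/\lambda^{i}$ deleted by the $i$th application of $\Delta$, the lemma is equivalent to the claim that the $i$th $\Delta$-step removes precisely $\rho(\{c:T(c)=i\})$.

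The main structural tool is the recursion read directly off the definition of $\Delta$: $\evac(T)$ equals $n$ on the cells $\lambda/\lambda^{1}$ removed by the first $\Delta$-step, and equals $\evac(\Delta(T))$ on the remaining cells $\lambda^{1}$ (the labels match because lowering the maximum from $n$ to $n-1$ exactly compensates for dropping the step index by one). Thus two things must be checked. The first is a clean geometric fact about the initial step: a forward jeu de taquin slide expels its hole at an \emph{outer} corner of the outer shape, and a rectangle has a unique outer corner, namely its southeast corner; deleting the $z$ copies of $1$ from the top of the first column and sliding them in therefore peels off the bottom $z$ cells of the last column, which is exactly $\rho(\{c:T(c)=1\})$. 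The second is the recursion itself.

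The principal obstacle is that $\lambda^{1}$ is \emph{not} rectangular, so one cannot simply invoke the lemma inductively on $\Delta(T)$. I would circumvent this by proving the stronger, shape-independent statement that for every straight-shape row-strict tableau $S$ one has $\evac(S)=\mathrm{rect}(S^{\rho})$, where $S^{\rho}$ is the rotate-and-reverse of $S$ placed as an anti-straight skew tableau inside the ambient bounding rectangle; this is proved by induction on the number of cells, again using the $\Delta$-recursion. The rectangular lemma is then the degenerate case: when $\lambda$ is a rectangle, $\rho(\lambda)=\lambda$ is itself a straight shape, so $S^{\rho}=T^\ast$ already is a tableau of shape $\lambda$ and its rectification is trivial, yielding $\evac(T)=T^\ast$ outright.

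The engine of the shape-independent statement, and the step I expect to be hardest, is the compatibility of jeu de taquin with rotation and alphabet reversal: the map $c\mapsto\rho(c)$, $x\mapsto N+1-x$ (with $N$ the maximum label of $S$) must convert a forward slide into a reverse slide, so that the forward $\Delta$-peeling of $S$ is intertwined with the dual peeling of $S^{\rho}$. The classical version of this symmetry is standard for tableaux with distinct entries, but here the delicate point is the equal-entry tie-break in the definition of $\jdt$ (choosing the box sharing its left edge with the empty cell); ties occur precisely in the genuinely row-strict, non-standard regime, so one must verify that the tie-breaking rule transforms consistently under $\rho$ and complementation. A secondary, more routine verification is the hole-landing claim for $z\ge 2$, where the $z$ slide paths interleave and one must confirm that they continue to peel up the last column rather than drifting into the last row.
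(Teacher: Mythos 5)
Your overall strategy---reduce everything to the shape-independent statement that $\evac(S)$ equals the rectification of the rotated-and-complemented tableau $S^{\rho}$, then specialize to rectangles, where $S^{\rho}$ is already of straight shape and rectification is trivial---is a legitimate route, and it is genuinely different from the paper's, which avoids induction and slide-by-slide analysis entirely. But as outlined, your induction does not close, and the gap is not where you locate it. The tie-breaking compatibility you flag (forward slides becoming reverse slides under $c \mapsto \rho(c)$, $x \mapsto N+1-x$) is true and routinely checkable. The real problem is that this compatibility, together with the $\Delta$-recursion and the inductive hypothesis, only controls $\mathrm{rect}(S^{\rho})$ on the entries strictly below the maximum $N$: since jeu de taquin commutes with restriction to lower alphabet intervals, $\mathrm{rect}(S^{\rho})_{<N} = \mathrm{rect}\bigl((S^{\rho})_{<N}\bigr)$, and $(S^{\rho})_{<N}$ is the rotate-complement of $S$ with its $1$'s deleted, hence (by your slide compatibility) jdt-equivalent to $(\Delta(S))^{\rho}$; the inductive hypothesis then identifies $\mathrm{rect}(S^{\rho})_{<N}$ with $\evac(\Delta(S))$, whose shape is that of $\Delta(S)$. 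What nothing in your plan determines is where the $z$ copies of $N$ land---that is, that the full shape of $\mathrm{rect}(S^{\rho})$ is $\mu$ itself rather than the shape of $\Delta(S)$ plus some other vertical strip. The intertwining cannot supply this: it shows that the delete-maxima-and-reverse-slide peeling of $S^{\rho}$ mirrors the $\Delta$-peeling of $S$, but relating that dual peeling to the \emph{forward} rectification of $S^{\rho}$ is exactly the theorem being proved, so the argument turns circular. Weaker constraints (e.g., that the $N$'s form a vertical strip jdt-equivalent to a single column) do not pin down the strip either.

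What fills this hole is precisely a jeu-de-taquin invariant that determines shapes, and that is the paper's entire proof: Greene--Kleitman invariants of reading words are preserved by slides and determine the shape both of a straight tableau and of an anti-straight tableau (the latter because reversing a word while complementing its alphabet preserves the invariants). With that tool no induction is needed: the shape of $\evac(T)_{\le a}$ equals the shape of the rectification of $T_{>n-a}$, these two tableaux have equal invariants, and straight versus anti-straight then forces their shapes to be $180^{\circ}$ rotations of each other, for every $a$. Alternatively, you could repair your write-up by simply citing the general fact $\evac(S) = \mathrm{rect}(S^{\rho})$ (Sch\"utzenberger's evacuation-by-complementation, which is in the literature for row-strict/semistandard tableaux); but then the induction, the corner-peeling fact, and the tie-breaking analysis all become superfluous, and as a self-contained argument your outline is missing its essential step.
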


Before we prove this lemma, we need to recall a few definitions and standard facts.
The \newword{reading word} of a (skew) tableau $T$ is the word obtained by reading the entries of $T$ down columns moving right to left. For example, the reading word of $\; \ytableaushort{12,13,34} \;$ is $234113$.

Given a word $w$, we define its \newword{Greene--Kleitman invariants} \cite{Greene:Schensted,Greene.Kleitman} as follows. Say $u$ is a \newword{$d_i$-subword} of $w$ if $u$ can be decomposed into $i$ pairwise disjoint nondecreasing subwords. Let $\Delta_i(w)$ be the length of the longest $d_i$-subword of $w$. Note that $\Delta_0(w) = 0$. An important fact is that, if $w$ is the reading word of a row-strict tableau $T$ of straight shape $\lambda$, then column $i$ of $\lambda$ has $\Delta_i(w) - \Delta_{i-1}(w)$ boxes. Another well-known fact we will need is that the Greene--Kleitman invariants of the reading word of a tableau are preserved under performing jeu de taquin slides. For a textbook exposition of these facts, see \cite[Chapter 3]{Fulton:YoungTableaux} (where the Greene-Kleitman invariant $\Delta_i(w)$ is denoted $L(w,i)$ and not given a name).

\begin{proof}[Proof of Lemma~\ref{lem:rotation}]
	For $U$ a tableau, let $U_{\leq a}$ denote the subtableau of $U$ labeled by entries that are at most $a$, and define $U_{>b}$ similarly. 
	
	By definition of evacuation, the shape of $\evac(T)_{\leq a}$ is the shape of the rectification of $T_{>n-a}$. Hence, the Greene--Kleitman invariants of $\evac(T)_{\leq a}$ and $T_{>n-a}$ are the same. Since $\evac(T)_{\leq a}$ is of straight shape and  $T_{>n-a}$ is of anti-straight shape, it follows that the shapes of these tableaux are $180^\circ$ rotations of each other. The lemma follows.
\end{proof}

\begin{example}
Beginning with the row-strict tableau $T$ with largest entry 8, we first rotate $T$ by $180^\circ$, and then reverse the alphabet by replacing each entry $x$ with $8+1-x$. The resulting tableau is $\epsilon(T)$: 
\begin{center}
\raisebox{-.35cm}{$T=$} \begin{ytableau}
1 & 2 & 3 & 5 \\
1 & 2 & 4 & 6\\
3 & 5 & 7 & 8
\end{ytableau} \raisebox{-.35cm}{$\longrightarrow$}
\begin{ytableau}
8 & 7 & 5 & 3 \\
6 & 4 & 2 & 1 \\
5 & 3 & 2 & 1 
\end{ytableau} \raisebox{-.35cm}{$\longrightarrow$}
\begin{ytableau}
1 & 2 & 4 & 6  \\
3 & 5 & 7 & 8  \\
4 & 6 & 7 & 8
\end{ytableau} \raisebox{-.35cm}{$=\epsilon(T)$.}
\end{center}
We may also note that the reading word of $T$ is $w=568347225113$. Since $T$ is of straight shape with each column consisting of $3$ boxes, it is guaranteed that the Greene--Kleitman invariants will be $(\Delta_1(w),\Delta_2(w),\Delta_3(w),\Delta_4(w)) = (3,6,9,12)$. The reader may check, for example, that $568347$ is a subword of $w$ with length $6$ that decomposes into $2$ disjoint nondecreasing subwords, but that there is no longer such subword of $w$ (although there are others of the same length).
\end{example}

We next define the type of web reflection we need for our main result. To this end, let $W$ be a web with boundary vertices cyclically numbered $1, 2, \dots, j$. If any boundary vertices are white, temporarily replace them with pairs of black boundary vertices by reversing the Russell contraction~\eqref{eq:Russell_reduction}. If boundary vertex $i\neq j$ was white, name the new black vertices $i$ and $i+0.1$ maintaining cyclic order. If boundary vertex $j$ was white, name its new black vertices $j-0.1$ and $j$, again maintaining cyclic order. Now, slide the vertices along the boundary until they are evenly spaced. 

Let $\bfa$ be the midpoint of the boundary arc of the disk between vertices $j$ and $1$.
Next, perform the Russell contraction~\eqref{eq:Russell_reduction} for each pair of boundary vertices previously obtained by reversing that contraction, placing each white boundary vertex at the midpoint of the boundary arc joining its black parent vertices. We have now constructed a web that is equivalent to the original one, but with more convenient spacing. In particular, we have arranged for white vertices to be spaced further apart than black vertices, as in the final diagram of Example~\ref{ex:bijections}.

Let $\bfb$ be the antipodal point to $\bfa$ and let $\mathcal{D}$ be the diameter joining $\bfa$ to $\bfb$. (Depending on the number of white vertices and the parity of $j$, $\bfb$ may either coincide with one of the boundary vertices of $W$ or not.)

\begin{theorem}\label{thm:main}
	Let $W$ be a web with cyclically numbered boundary vertices.  Let $W'$ be the reflection of $W$ across the diameter $\mathcal{D}$ (as constructed above). Then 
	\[
	\mathbb{T}(W') = \evac(\mathbb{T}(W)).
	\]\
\end{theorem}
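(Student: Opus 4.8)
The plan is to show that the bijection $\mathbb{T}$ intertwines reflection across $\mathcal{D}$ with the explicit tableau operation furnished by Lemma~\ref{lem:rotation}. First I would make the reflection explicit on boundary labels: since $\bfa$ is the midpoint of the boundary arc between vertices $j$ and $1$ and the vertices are evenly spaced, reflection across $\mathcal{D}$ fixes $\bfa$ and $\bfb$ and therefore sends boundary vertex $i$ to boundary vertex $j+1-i$. By Lemma~\ref{lem:rotation}, $\evac(\mathbb{T}(W))$ is obtained from $\mathbb{T}(W)$ by a $180^\circ$ rotation together with the alphabet reversal $x \mapsto n+1-x$. Thus it suffices to prove that this rotate-and-reverse operation on a tableau $T$ corresponds, under $\mathbb{W}$, to relabeling the boundary of $\mathbb{W}(T)$ by $i \mapsto j+1-i$; that is, that $\mathbb{W}(\evac(T))$ is the reflection of $\mathbb{W}(T)$.

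For the $\fsl_2$ case, with $T$ of shape $2\times n$, I would record the reading along the values $1, \dots, 2n$ as a bracket word, writing $($ for a top-row value (an opener) and $)$ for a bottom-row value (a closer); standardness guarantees a balanced Dyck word, and the Catalan matching is precisely the pairing of matched brackets. The rotate-and-reverse operation sends a value $x$ in row $r$, column $c$ to the value $2n+1-x$ in row $3-r$, column $n+1-c$, which on the bracket word reverses the word and interchanges the symbols $($ and $)$. Reversing and swapping a balanced bracket word again yields a balanced bracket word and carries matched pairs to matched pairs, so the pair $\{x,y\}$ of $T$ becomes the pair $\{2n+1-x, 2n+1-y\}$. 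As $j = 2n$ here, this is exactly the relabeling $i \mapsto j+1-i$, settling the $\fsl_2$ case.

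For $\fsl_3$ I would treat the Tymoczko bijection first. For a $3\times k$ standard tableau, rotate-and-reverse interchanges rows $1$ and $3$ (reversing each) and fixes row $2$ setwise; applying the $\fsl_2$ analysis to the top-two-row and bottom-two-row pairings, which are swapped by the row interchange, shows that the $m$-diagram of $\evac(T)$ is the reflection of the $m$-diagram of $T$. Since the tripod replacement \eqref{eq:tripods} and the crossing resolution \eqref{eq:resolve_crossing} are local rules invariant under reflection and the resulting web is independent of the order of resolution, $\mathbb{W}(\evac(T))$ is the reflection of $\mathbb{W}(T)$ in the Tymoczko case. For the Russell bijection I would first check that standardization commutes with rotate-and-reverse: tracking a repeated value $i$ of a Russell tableau $T$ at an upper cell $p$ and a lower cell $q$, rotate-and-reverse moves $p$ to a lower cell and $q$ to an upper cell while reversing the order of values, which is exactly compatible with the tie-breaking rule assigning the larger standardized value to the lower instance. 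Hence $\mathrm{std}(\evac(T)) = \evac(\mathrm{std}(T))$, so a repeated value of $\evac(T)$ produces in $\mathrm{std}(\evac(T))$ a consecutive pair $\{N-j, N+1-j\}$ precisely when the corresponding repeated value of $T$ produces $\{j,j+1\}$ in $U = \mathrm{std}(T)$, where $N = 3k$. Since reflection sends $\{j, j+1\}$ to $\{N+1-j, N-j\}$, the Russell contractions \eqref{eq:Russell_reduction} are carried out at reflected pairs, with white vertices placed at the reflected arc midpoints prescribed by the spacing construction; combining this with the Tymoczko case gives the result.

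The crux is this $\fsl_3$ Russell step, where one must verify simultaneously that standardization commutes with evacuation and that the contraction and its reversal are consistent with reflection. Here the precise spacing construction defining $\bfa$, $\bfb$, and $\mathcal{D}$, namely placing white vertices at arc midpoints and spacing them apart from black vertices, is essential, since a naive placement of the reflecting line would fail to align with the contraction points; this is the subtlety the introduction flags. Confirming reflection-invariance of the crossing resolution \eqref{eq:resolve_crossing} also demands care, but it reduces to a finite local check.
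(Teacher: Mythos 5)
Your proposal is correct and follows essentially the same route as the paper's proof: both use Lemma~\ref{lem:rotation} to recast evacuation as rotate-and-reverse, settle the $2$-row case by a symmetry of the Catalan pairing, reduce the $3$-row Tymoczko case to the $2$-row case via $m$-diagrams together with reflection-invariance of the local rules \eqref{eq:tripods} and \eqref{eq:resolve_crossing}, and handle Russell tableaux by commuting standardization with evacuation and invoking reflection-invariance of the contraction \eqref{eq:Russell_reduction}. Your bracket-word formulation of the pairing and your explicit verification that standardization commutes with rotate-and-reverse are simply more detailed renderings of steps the paper treats via Remark~\ref{rem:pairing_order} and a terse appeal to Lemma~\ref{lem:rotation}.
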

\begin{proof}
	Let $U = \mathbb{T}(W)$ and let $U' = \evac(U)$. We will show that $U' = \mathbb{T}(W')$. We first consider the case that these are standard Young tableaux.
	
	By Lemma~\ref{lem:rotation}, $U'$ is obtained from $U$ by rotating $180^\circ$ and reversing the alphabet. If $U$ has $2$ rows, then the pairing for $U$ of the Catalan bijection is the $180^\circ$ rotation of the pairing for $U'$, since pairing $U$ according to the traditional order described in Remark~\ref{rem:pairing_order} corresponds to pairing $U'$ according to the other order described in that remark. Hence, the webs for $U$ and $U'$ are reflections of each other, as desired.
	
	Suppose instead that $U$ has $3$ rows. 
	It suffices to show that the $m$-diagram for $U'$ is the reflection of the $m$-diagram for $U$, since the rules \eqref{eq:tripods} and \eqref{eq:resolve_crossing} for obtaining webs from $m$-diagrams are clearly reflection-symmetric.
	Moreover, $m$-diagrams only depend on relations between pairs of consecutive tableau rows. 
	 Hence, the theorem for $3$-row standard Young tableaux then follows from Lemma~\ref{lem:rotation} and the above symmetry of the pairings in the $2$-row case.
	
	Finally, suppose that $U$ is a Russell tableau. Let $V$ and $V'$ be the standardizations of $U$ and $U'$, respectively. We have already shown that $\mathbb{W}(V)$ and $\mathbb{W}(V')$ are reflections of each other. It then follows from Lemma~\ref{lem:rotation} that we obtain $\mathbb{W}(U)$ and $\mathbb{W}(U')$ from $\mathbb{W}(V)$ and $\mathbb{W}(V')$ by applying the Russell contraction \eqref{eq:Russell_reduction} in positions that differ by reflection across $\mathcal{D}$. Since the Russell contraction rule \eqref{eq:Russell_reduction} is also reflection-symmetric, the theorem then follows.
\end{proof}

\begin{example}
Consider the Russell tableau $T$ and corresponding $\fsl_3$ web $W$ from Example~\ref{ex:bijections}. The points $\bfa$ and $\bfb$ are shown on $W$ below. Reflecting over the dotted line $\mathcal{D}$ connecting $\bfa$ and $\bfb$ gives the $\fsl_3$ web $W'$, which the reader can verify is the web corresponding to $\epsilon(T)$.
\begin{center}
$T=\begin{ytableau}
1 & 2 & 3 \\ 1 & 4 & 5 \\ 3 & 6 & 7
\end{ytableau}$\hspace{1.3in}
$\epsilon(T)=\begin{ytableau}
1 & 2 & 5\\
3 & 4 & 7 \\
5 & 6 & 7
\end{ytableau}$\\

\vspace{0.7cm}
\includegraphics[width=5in]{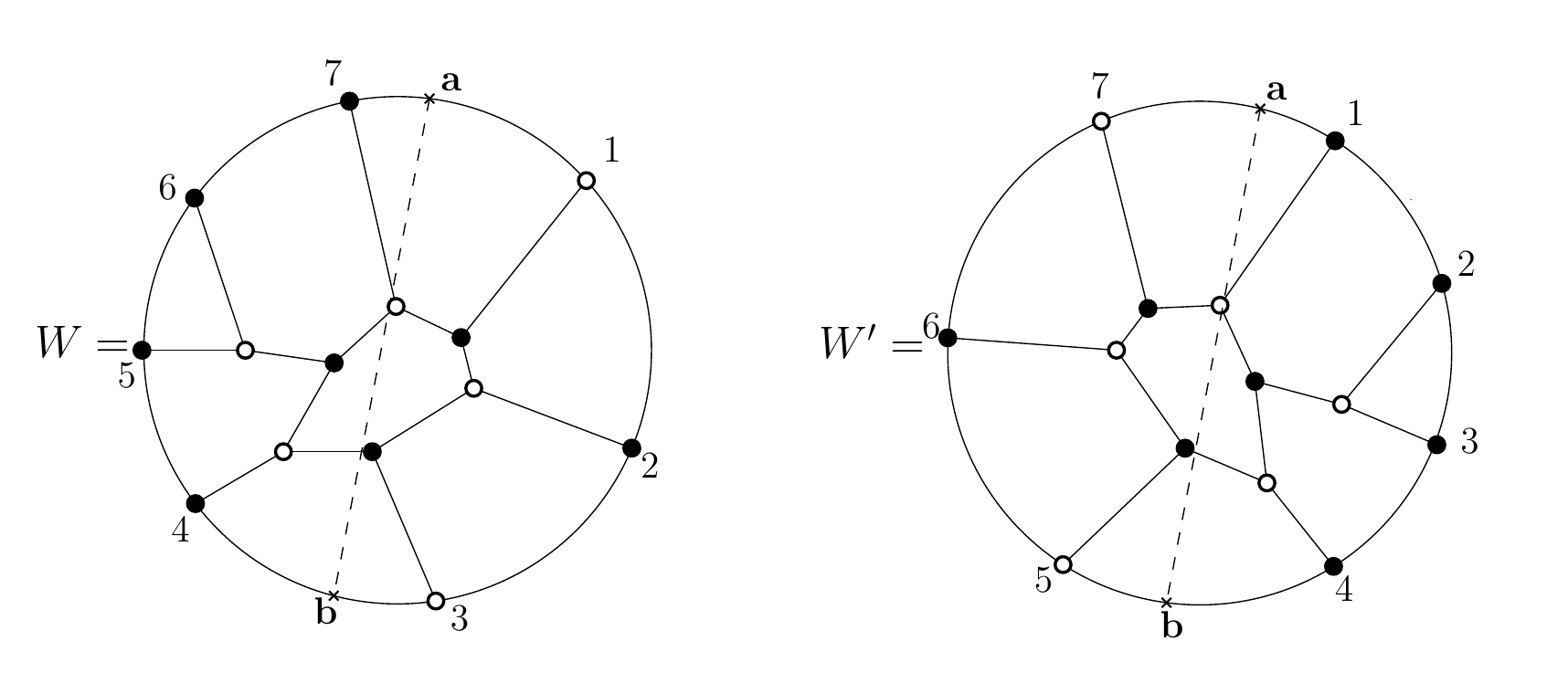}
\end{center}
\end{example}

\section*{Acknowledgements}
We would like to thank Pasha Pylyavskyy for encouraging us to write this paper.
Thanks also to Sam Hopkins, Brendon Rhoades, and Jessica Striker for very helpful conversations and comments on an earlier draft of this manuscript.

OP acknowledges support from NSERC Discovery Grant RGPIN-2021-02391 and Launch Supplement DGECR-2021-00010.

\bibliographystyle{amsalpha} 
\bibliography{evacuation}

\end{document}